\theoremstyle{plain}
\newtheorem{coro}{Corollary}[section]
\newtheorem{lem}[coro]{Lemma}
\newtheorem{theo}[coro]{Theorem}
\theoremstyle{definition}
\newtheorem{defi}[coro]{Definition}
\newtheorem{exa}[coro]{Example}
\theoremstyle{remark}
\newtheorem{rem}[coro]{Remark}
\newcommand{\dtg}{\mathrm OTG}
\renewcommand{\Pi}{{\textbf v}}
\newcommand{\R}{\mathbb{R}}
\newcommand{\wout}{{\backslash}}
\renewcommand{\phi}{{\varphi}}
\newcommand{\lb}{\left\{}
\newcommand{\rb}{\right\}}
\newcommand{\mylabel}[1]{\label{#1}}
\begin{document}
%% Start formatting the first few special pages
%% frontmatter is needed to set the page numbering correctly
%\frontmatter

\title{Oriented Threshold Graphs}
\author{Derek Boeckner \\ {\footnotesize\emph{University of Nebraska - Kearney}}\\{\footnotesize{\emph{ boecknerdc@unk.edu}}}}
%\adviser{Professor Jamie Radcliffe}
%\adviserAbstract{Jamie Radcliffe}
%\major{Mathematics}
%\degreemonth{May}
%\degreeyear{2013}
%%
%% For most people the defaults will be correct, so they are commented
%% out. To manually set these, just uncomment and make the needed
%% changes
%%\college{University of Nebraska-Kearney}
%% \city{Your City}
%%
%% For most people the following can be changed with a class
%% option. To manually set these, just uncomment the following and
%% make the needed changes.
%% \doctype{Thesis or Dissertation}
%% \degree{Your degree}
%\degreeabbreviation{Ph.D.}
%%
%% Now that we know everything we need, we can generate the title page
%% itself.
%%
\maketitle
%%
%% You have a maximum of 350 words for your abstract, which includes
%% your title, name, etc.
%%
%% Required
\begin{abstract}

Threshold graphs are a prevalent and widely studied class of simple graphs. They have several equivalent definitions which makes them a go-to class for finding examples and counter examples when testing and learning.  This versatility has led to many results about threshold graphs and similar structures.  We look to generalize this class of graphs to oriented graphs (directed simple graphs.)  We give generalizations to four of the most versatile definitions and show their equivalence in the oriented case.  We finish with a proof enumerating the number of these oriented threshold graphs which relates to the Fibonacci numbers.  

\end{abstract}

%% Optional
%% \begin{copyrightpage}
%% \end{copyrightpage}

%% Optional
%% \begin{dedication}
%% \end{dedication}

%% Optional
%% \begin{acknowledgments}
%% \end{acknowledgments}

%% Optional
%% \begin{grantinfo}
%% \end{grantinfo}
%% The ToC is required
%% Uncomment these if need be

%% The ToC is required
%\tableofcontents
%% Uncomment these if need be
%\listoffigures
%\listoftables
%%
%% ``Real'' beginning of the document.
%% mainmatter is needed to set the page numbering correctly
%%   mainmatter is needed after the ToC, (LoF, and LoT) to set the
%%   page numbering correctly for the main body
%\mainmatter

\section{Introduction}
\setcounter{equation}{0}

\subsection{History}
Threshold graphs were first seen in several publications in the mid 1970s.  Papers in a variety of areas independently developed basic definitions for a class of graphs which gets it's name from a 1973 paper titled \emph{Set-packing Problems and Threshold Graphs} by Chv\'atal and Hammer \cite{CH73}.   These graphs have been found in numerous applications since their introduction.  These cover a wide range of subjects including applications in set-packing, parallel processing, resource allocation, scheduling, and psychology.  There is a great introduction to threshold graphs and their applications in the book \emph{Threshold Graphs and Related Topics} by Mahadev and Peled, \cite{MP95}.

In recent years, the limit points of threshold graphs (as graphons) have been studied in a paper by Diaconis, Holmes, and Jansen, \cite{DHJ08}.  This gives an interesting result that their limits can be realized as $\lb 0, 1\rb-$valued increasing functions on the unit square.   

Another recent result is by Cloteaux, LaMar, Moseman, and Shook, \cite{CLMS12}.  Their generalization to directed graphs is focused on degree sequences and unique realizations.   This work is extended by Reilly, Scheinerman, and Zhang, \cite{RSZ14}.  These extensions generalize definitions of simple threshold graphs into directed graphs and demonstrate their equivalence with the definitions of Cloteaux, et al.  These definitions deal predominately with directed graphs in which 2-cycles (multiedges in the underlying graph) are permitted in order to obtain unique realizability. 

In this paper we'll look at oriented simple graphs where we prohibit such 2-cycles and see some surprisingly lovely results. 

\subsection{Background}

Mahadev and Peled in \cite{MP95} give a thorough treatment of the class of threshold graphs.  Here we give the basic definition and some equivalences. 

\begin{defi}\mylabel{TG} Let $G$ be a graph.  We say that $G$ is a\index{graph, threshold} \emph{threshold graph} if there exists a threshold $t\in \R$ and a vertex weight function $w:V(G)\to \R$  such that $e=(x,y)\in E$ if and only if $w(x)+w(y) > t$.

\end{defi}

Though this is a fairly simple definition to work with, there are several equivalences that will be worth considering.  To understand them we need two definitions: 

\begin{defi} \mylabel{split} \index{graph, split}A graph, $G=(V,E)$, is said to be \emph{split} if the vertex set $V$ can be partitioned into two classes $K$ and $I$ such that $K$ induces a clique in $G$, and $I$ is an independent set in $G$. \end{defi}

\begin{defi} A graph, $G=(V,E)$,  on four vertices is a \emph{switch} \index{switch}if there is an ordering of the vertices, $a,b,c,d$ such that $ab,cd\in E$ and $ad,bc\notin E$. 

We say a graph is \emph{switch-free} if it contains no induced switches.  \end{defi}

The definition of a switch describes several different graphs. For example, $C_4$, 2 copies of $K_2$, and $P_3$ the path with three edges are all switches. 

We can now state 4 characterizations of threshold graphs.

\begin{theo}\mylabel{TGequiv} \cite{MP95} The following are equivalent:

\begin{enumerate}[(i)]

\item $G$ is a threshold graph.

\item $G$ is a split graph and the vertex neighborhoods are nested.

\item $G$ is switch-free.

\item The graph $G$ can be constructed by starting with a single vertex and sequentially adding either a dominating vertex or an isolated vertex at each step.

\end{enumerate}

\end{theo}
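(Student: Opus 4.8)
The plan is to prove the four statements equivalent through a short cycle of implications, powered by two structural lemmas about switch-free graphs. The cheapest step is $(i)\Rightarrow(iii)$: if $G$ carried weights $w$ and threshold $t$ but had an induced switch on $a,b,c,d$ with $ab,cd\in E$ and $ad,bc\notin E$, then adding the two edge inequalities gives $w(a)+w(b)+w(c)+w(d)>2t$, while adding the two non-edge inequalities gives $w(a)+w(b)+w(c)+w(d)\le 2t$, a contradiction; hence a threshold graph is switch-free.

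The heart of the argument is two lemmas. First I would show that a switch-free graph has \emph{nested neighborhoods}: if two vertices $u,v$ had incomparable neighborhoods, pick $x\in N(u)\setminus(N(v)\cup\{v\})$ and $y\in N(v)\setminus(N(u)\cup\{u\})$; one checks that $u,x,v,y$ are distinct and that the ordering $u,x,v,y$ realizes an induced switch (edges $ux,vy$, non-edges $uy,xv$), a contradiction. Second, I would show that a graph with nested neighborhoods always has an isolated or a dominating vertex. Let $v_1$ be a vertex of minimum degree; comparing $v_1$ with any other vertex $w$, nestedness together with minimality of $\deg v_1$ forces $N(v_1)\setminus\{w\}\subseteq N(w)$. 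If $v_1$ is not isolated it has a neighbor $z$; then for every vertex $w\notin\{v_1,z\}$ we have $z\in N(v_1)\setminus\{w\}\subseteq N(w)$, and since also $z$ is adjacent to $v_1$, the vertex $z$ is adjacent to every other vertex, i.e.\ $z$ is dominating.

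With these in hand the cycle closes. For $(iii)\Rightarrow(iv)$ I would induct on $|V(G)|$: an induced subgraph of a switch-free graph is switch-free, the two lemmas produce an isolated or dominating vertex, and deleting it and then re-adding it at the end of the construction sequence for the smaller graph yields the desired build-up. For $(iv)\Rightarrow(ii)$, note that in a construction sequence $v_1,\dots,v_n$ one has $v_iv_j\in E$ (for $i<j$) exactly when $v_j$ was added as a dominating vertex; taking $K$ to be the dominating-type vertices and $I$ the isolated-type vertices exhibits a split, and nestedness is preserved at every step (adding a vertex either to all current neighborhoods or to none), so it holds throughout. For $(ii)\Rightarrow(i)$ I would pass back through $(iv)$: a split graph with nested neighborhoods again has an isolated or dominating vertex by the second lemma, both conditions are inherited by induced subgraphs, so induction gives a construction sequence; then assign weights $w(v_j)=+M^{j}$ if $v_j$ is dominating-type and $w(v_j)=-M^{j}$ if isolated-type, with threshold $t=0$ and $M>1$. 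Since $M^{j}>M^{i}$ for $i<j$, the higher-index term dominates the sum, so $w(v_i)+w(v_j)>0$ if and only if $v_j$ is dominating-type, i.e.\ if and only if $v_iv_j\in E$.

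The main obstacle is the second lemma, the existence of an isolated or dominating vertex: it is precisely the engine that drives the inductive construction in $(iv)$ and hence the entire equivalence, and a naive attempt to extract a forbidden switch from an arbitrary incomparable pair of vertices stalls on stubborn triangle cases. The clean resolution above---anchoring on a vertex of minimum degree and letting nestedness propagate its single neighbor into a dominating vertex---sidesteps that case analysis entirely, after which every remaining implication is routine.
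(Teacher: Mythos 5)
Your proof is correct, but there is nothing in the paper to compare it against: the paper states this theorem purely as background, citing Mahadev and Peled \cite{MP95}, and gives no proof of its own. Your cycle $(i)\Rightarrow(iii)\Rightarrow(iv)\Rightarrow(ii)\Rightarrow(i)$ closes, and each step checks out: the summing contradiction for $(i)\Rightarrow(iii)$; the extraction of an induced switch (edges $ux,vy$, non-edges $uy,xv$, with $uv$ and $xy$ correctly left unconstrained, since the paper's definition of a switch places no condition on those two pairs) from an incomparable pair; the preservation of switch-freeness, splitness, and nestedness under vertex deletion, which is what drives both inductions; and the weights $w(v_j)=\pm M^{j}$ with $t=0$, $M>1$, where the later-added vertex's weight dominates the sum in absolute value. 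The one claim you state faster than it deserves is that minimality of $\deg v_1$ plus nestedness forces $N(v_1)\setminus\{w\}\subseteq N(w)$: this is immediate when $N(v_1)\subseteq N(w)\cup\{w\}$, but when the nesting goes the other way, $N(w)\subseteq N(v_1)\cup\{v_1\}$, a short count is needed --- if $w\notin N(v_1)$ then $N(w)\subseteq N(v_1)$ and minimum degree forces $N(w)=N(v_1)$; if $w\in N(v_1)$ then $N(w)\subseteq (N(v_1)\setminus\{w\})\cup\{v_1\}$, a set of size $\deg v_1$, and minimality again forces equality --- so the inclusion holds in every case, but that case split should appear in the final write-up. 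It is also worth noting that your Lemma 2 (an extremal vertex in the nested order must be isolated or dominating) is the same engine the paper itself builds for the oriented analogue in Theorem \ref{equivdef}, $(c)\Rightarrow(d)$, where minimal and maximal vertices in the nested orders are shown to be independent, in-dominated, or out-dominating; so your argument is a natural undirected specialization of the techniques the paper develops, which makes it a fitting self-contained substitute for the external citation.
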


Equivalence (iv) of Theorem \ref{TGequiv} allows a very nice constructive bijection between binary sequences of length $n-1$ and threshold graphs on $n$ vertices.   We define a threshold graph by  creating a binary sequence $\bar s =(s_i)_{i=1}^n$ where $s_i=1$ if the vertex added is dominating, or $s_i=0$ if it is isolated.  Given such a sequence we define $T(\bar s)$ to be the threshold graph associated with it.  In this construction the very first vertex is both isolated and dominating; we therefore classifying it as a 0 or 1 is somewhat misleading.  We will always classify the first vertex as $\star$ when giving a threshold graph in its sequential form.  We use the convention that the sequence is constructed right to left, thinking of the first vertices added as least significant, as in least significant digits in a number.  

\section{Oriented Threshold Graphs}\mylabel{dtgsec}

There are several definitions for a threshold graph in the undirected case, Theorem \ref{TGequiv}.  We  begin by developing an analogous vocabulary for oriented graphs and then state a theorem presenting several equivalent definitions of an oriented threshold graph.   

\begin{defi}\mylabel{dtgdef} \index{digraph, threshold}An oriented graph, $G=(V,E)$, is said to be \emph{threshold} if there exists a weight function on the vertices $w:V\to \R$ and a threshold value $t\in\R$ such that $\overrightarrow{xy}\in E$ if and only if $|w(x)|+|w(y)| \ge t$ and $w(x)>w(y)$.\end{defi}

Since we insist on strict inequality, $w(x)>w(y),$ so that our oriented threshold graphs contain no loops.  This also lets us think of edges running `downhill.'

\subsection{Background and Oriented Threshold Equivalence}

Before we state our main theorem which is directly analogous to Theorem \ref{TGequiv}, we need to develop vocabulary to state corresponding statements in the oriented case. 

The first generalization we'll explore is \emph{nested neighborhoods}.  These next definitions will help us generalize the concept of nested neighborhoods to oriented graphs where we have not just a total neighborhood, but have in and out neighborhoods as well. 

\begin{defi}\mylabel{displit} \index{digraph, split}\index{displit}A directed graph is said to be \emph{displit} if the vertex set can be partitioned into three classes, $V=B\cup I\cup T$ (Bottom, Independent, and Top), with the properties:

\begin{enumerate}[i)] 

\item $I$ is an independent set,

\item the graph induced by $B\cup T$ is a tournament,

\item all edges between $T$ and $B\cup I$ are directed from $T$, 

\item all edges between $B$ and $T\cup I$ are directed into $B$. 

\end{enumerate}

\end{defi}

A small example of a displit graph may be helpful in understanding this definition. 

\begin{figure}[htb]\caption{A small displit graph}\mylabel{displitexa}

\begin{center}

\begin{tikzpicture}[scale=.75, transform shape, >=triangle 45]

\node[place] (2) at (-2,2){T};
\node[place] (1) at (-1,1){T};
\node[place] (0) at (0,0){I};
\node[place] (-1) at (-1,-1){B};
\node[place] (-2) at (-2,-2){B};
\node[place] (i) at (1.4,0){I};
\node[place] (ii) at (2.8,0){I};

\tikzstyle{EdgeStyle}=[->,bend right]

\Edge({2})({-1})
\Edge({2})({-2})
\Edge({1})({-2})

\tikzstyle{EdgeStyle}=[->]
\Edge({2})({1})

\Edge({1})({0})
\Edge({1})({-1})

\Edge({0})({-1})

\Edge({-1})({-2})

\tikzstyle{EdgeStyle}=[->, bend left]
\Edge({2})({ii})
\Edge({2})({i})
\Edge({1})({i})
\Edge({i})({-2})
\Edge({2})({0})
\Edge({0})({-2})

\end{tikzpicture} 

\end{center}

\end{figure}

LaMar prior to working on the unigraphic sequence problem for digraphs gave a definition for split digraphs in \cite{L12}, the definition we just gave fits within his, but is slightly more restrictive.  With this stronger definition, we end up with a smaller class of graphs, but we are able to say much more about the structure of our class, both by giving an inductive construction from a ternary sequence, and by defining what it means to have nested neighborhoods in the sense of directed threshold graphs.

\begin{defi}\mylabel{doms} \index{in-dominated}\index{out-dominating}A vertex, $v$, in a digraph $D$ is called an \emph{out-dominating} (\emph{in-dominated}) if it is adjacent to every other vertex in $D$ and is a source (resp. sink). \end{defi}

\begin{defi} Let $\sigma:V\to 2^V$ be a function from a set to its power set.  We say the function $\sigma$ is \index{nested}\index{function, nested}\emph{nested}, on $S\subset V$ if for every $x,y\in S$ we have $\sigma(x)\subseteq \sigma(y)\cup \lb y\rb$ or $\sigma(y)\subseteq \sigma(x)\cup\lb x\rb$.  We denote $\sigma(x)\subseteq \sigma(y)\cup\lb y\rb$ by $x\unlhd^\sigma y$.   

We say the function $\sigma$ is {\emph strictly nested} on $S\subset V$, if for every $x,y\in S$ we have either $\sigma(x)\subseteq \sigma(y)$ or $\sigma(y)\subseteq \sigma(x)$.  For $x,y\in V$ we denote $\sigma(x)\subsetneq \sigma(y)$ by $x\lhd^\sigma y$.  

If $\sigma$ is nested (resp. strictly nested) on all of $V$, we say that $\sigma$ is nested (resp. strictly nested.) \end{defi}

Recall that in Theorem \ref{TGequiv} one of the conditions that makes a graph threshold is that it is split and has nested neighborhoods.  That is, the neighborhood function of a threshold graph is a nested function.  With this in mind, the proper way to view nested and strictly nested functions in terms of directed threshold graphs is via the following definition.

\begin{defi} \mylabel{pnn} Let $D$ be a displit graph with clique $K=T\cup B$ and independent set $I$.  We say $D$ has \index{ properly nested neighborhoods} \emph{properly nested neighborhoods} if the following hold for $N,N^+, N^-: V\to P(V)$ the neighborhood, out-neighborhood and in-neighborhood functions: 

\begin{enumerate}[i)]

\item $N$ is nested,

\item $N^+$ and $N^-$ are nested on $I$ and for $x,y\in I$ we have if $x\unlhd^N y$ then $x\unlhd^- y$ and  $x\unlhd^+ y$ (I've suppressed the Ns in the inequalities to make the notation less cumbersome.)

\item $N^+$ and $N^-$ are strictly nested on $K$ and for $x,y\in K$ we have $x\lhd^+ y$ if and only if $y\lhd^- x$.  

\end{enumerate}

\end{defi}

The properly nested neighborhoods condition states that the total neighborhoods are nested.  For vertices in $I$, the size of in and out neighborhoods are directly correlated, whereas in $B\cup T$ the in and out neighborhoods are inversely correlated.  Figure \ref{displitexa} gives an example of a graph which has properly nested neighborhoods. 

Now, we can generalize Theorem \ref{TGequiv} to directed threshold graphs.

%% \begin{defi}\mylabel{usumdef} Let $V$ be a vertex set, $w:V\to \R$ be a weight function and $t\in \R_{\ge 0}$.  The edge set for the \emph{weighted threshold graph} $G:=(V,w,t)$ is given by $V(G)=V$ and $E(G) = \lb (x,y) : w(x)+w(y)\ge t\rb.$\end{defi}

%%\begin{defi}\mylabel{useqdef} Let $V$ be an ordered vertex set, say $[n]$, let $s:[n]\to \lb 0,1\rb$.  The edge set for the \emph{sequencial threshold graph}, $G:=(V,s)$, is defined as follows, $V(G)=V$ and $E(G) = \lb (x,y)\in [n]^2 : x>y \text{\ and\ } s(x)=1\rb.$ \end{defi}

\begin{theo}\mylabel{equivdef} The following are equivalent for a graph $G=(V,E)$: 

\begin{enumerate}[(a)]

\item $G$ is an oriented threshold graph. 

\item $G$ is a transitive orientation of a threshold graph. 

\item $G$ is a displit graph and has properly nested neighborhoods.

\item $G$ can be constructed from the one vertex empty graph by successively adding a independent vertex, an out-dominating vertex or an in-dominated vertex. 

\end{enumerate}
\end{theo}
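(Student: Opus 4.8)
The plan is to close the cycle of implications $(a)\Rightarrow(b)\Rightarrow(c)\Rightarrow(d)\Rightarrow(a)$, which renders all four conditions equivalent. The first implication is the easy one. Starting from a weight function $w$ and threshold $t$ realizing $G$ as in Definition \ref{dtgdef}, I would perturb $w$ slightly so that no two values $|w(x)|+|w(y)|$ coincide, set $w'(v)=|w(v)|$, and observe that the underlying undirected graph has $xy\in E$ exactly when $w'(x)+w'(y)>t'$ for a suitable $t'$; by Definition \ref{TG} this underlying graph is threshold. Since $\overrightarrow{xy}\in E$ precisely when $w(x)>w(y)$, the orientation is manifestly transitive, giving $(b)$.

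For $(b)\Rightarrow(c)$, write the underlying threshold graph as a split graph with clique $C$ and independent set $I$, using the undirected characterization Theorem \ref{TGequiv}(ii). Restricting the transitive orientation to the clique yields a transitive tournament, i.e. a linear order on $C$. I would define $T$ to be those vertices of $C$ that point to all of their $I$-neighbors and $B$ those that receive from all of their $I$-neighbors, and then verify the displit axioms of Definition \ref{displit}: transitivity forces this property to be upward-closed in the linear order, so $T$ is an up-set and $B$ a down-set, which makes every $T$--$B$ edge run from $T$ to $B$ and yields conditions (ii)--(iv). With $K=T\cup B$ in hand, the properly nested neighborhood conditions of Definition \ref{pnn} follow: $N$ is nested because the underlying graph is threshold, the direct correlation of $N^+,N^-$ on $I$ and the inverse correlation on $K$ (i.e. $x\lhd^+ y \iff y\lhd^- x$) come from the linear order on $K$ combined with the nestedness of the undirected neighborhoods.

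For $(c)\Rightarrow(d)$ I would peel off one extreme vertex and induct. The key is a dichotomy: let $\ell$ be an $\unlhd^N$-minimum and $m$ an $\unlhd^N$-maximum, both of which exist because $N$ is nested. If $N(\ell)=\varnothing$ then $\ell$ is an isolated vertex, necessarily in $I$, and may be removed as an independent vertex. Otherwise every vertex has a neighbor, and threshold structure forces $m$ to be adjacent to all others; since $I$ is independent, $m$ lies in $K$ (the small cases $|I|\le 1$ being checked directly), and the inverse correlation of $N^+$ and $N^-$ on $K$ identifies $m$ with an end of the linear order, hence a global source in $T$ (an out-dominating vertex) or a global sink in $B$ (an in-dominated vertex). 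One then checks that deleting this extreme vertex returns a smaller displit graph with properly nested neighborhoods, so induction applies.

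For $(d)\Rightarrow(a)$ I would show the oriented threshold class is closed under each of the three additions, maintaining the invariant that a realization $(w,t)$ can be chosen with $0<|w(v)|<t$ for every vertex. Given such a realization, an out-dominating vertex receives a weight in the nonempty interval $\lb \max(\max_v w(v),\,t-\min_v|w(v)|),\ t\rb$, forcing it to be a source adjacent to all; an in-dominated vertex is handled symmetrically with a negative weight; and an independent vertex receives a weight in $\lb 0,\ t-\max_v|w(v)|\rb$, small enough to be adjacent to nothing. Each case preserves the invariant, so building the graph as in $(d)$ produces weights realizing it, closing the cycle. The main obstacle I anticipate is $(b)\Rightarrow(c)$ together with the vertex-identification step in $(c)\Rightarrow(d)$: proving that the $T$/$B$ cut is well defined and that the maximum-neighborhood vertex is genuinely a source or a sink requires using transitivity and the nesting hypotheses in tandem, and these are exactly the places where the directed setting departs most sharply from the undirected argument.
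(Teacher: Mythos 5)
Your overall architecture --- the cycle $(a)\Rightarrow(b)\Rightarrow(c)\Rightarrow(d)\Rightarrow(a)$ --- matches the paper's, and your $(d)\Rightarrow(a)$ step (inductively maintaining a realization with $0<|w(v)|<t$ and inserting each new vertex with a suitably extreme or suitably small weight) is a correct, self-contained alternative to the paper's argument, which instead reuses the undirected threshold weights and flips their signs according to the ternary sequence. However, two of your steps have genuine gaps. The serious one is in $(c)\Rightarrow(d)$: it is false that an $\unlhd^N$-maximum $m$ is ``an end of the linear order'' on $K$, hence a global source or sink. Concretely, take $\dtg(+,+,-,0,\star)$, i.e.\ $K=\{a,b,c\}$ with $a\to b$, $b\to c$, $a\to c$, and $I=\{i,i'\}$ with $a\to i$, $a\to i'$, $b\to i$, $b\to i'$, $i\to c$, $i'\to c$. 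Here $a$, $b$, $c$ are each adjacent to every other vertex, so all three are $\unlhd^N$-maxima, yet $b$ is neither a source nor a sink (it receives from $a$ and sends to $c$, $i$, $i'$). Nestedness of the total neighborhood function cannot distinguish $b$ from $a$ and $c$; you must instead select the maximum of one of the \emph{strictly} nested orders $N^+$ or $N^-$ on $K$, and which of the two you need depends on the direction of the edges between $I$ and $K$ (in $\dtg(-,0,\star)$ the $N^+$-maximum of $K$ is a sink, not a source, so the wrong choice fails). This is exactly how the paper proceeds: it takes a minimal $i\in I$, uses an edge $i\to x$ (resp.\ $x\to i$) to decide which order to use, and then shows the $N^-$-maximum (resp.\ $N^+$-maximum) of $K$ is in-dominated (resp.\ out-dominating), invoking the correlation $x\lhd^+ y\iff y\lhd^- x$ to rule out out-edges (resp.\ in-edges) inside $K$.

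The second gap is in $(a)\Rightarrow(b)$: ``since $\overrightarrow{xy}\in E$ precisely when $w(x)>w(y)$, the orientation is manifestly transitive'' is a non sequitur. Orienting the edges of a graph downhill along a vertex order does not make the digraph transitive --- a downhill-oriented path on three vertices is not transitive --- so what needs proving is that $x\to y\to z$ forces the underlying edge $xz$ to exist, i.e.\ $|w(x)|+|w(z)|\ge t$. This does hold, but only via the observation that the middle vertex satisfies $|w(y)|\le\max(|w(x)|,|w(z)|)$, split into the cases $w(y)\ge 0$ and $w(y)\le 0$ exactly as in the paper; your proposal never makes this argument. Finally, a smaller, repairable issue in $(b)\Rightarrow(c)$: defining $T$ and $B$ by behavior toward $I$-neighbors leaves a clique vertex with no $I$-neighbors vacuously in both classes, so you need a tie-breaking rule (placement in the linear order of the clique) to get an actual partition; the paper sidesteps this entirely by setting $T=N^-(\star)$ and $B=N^+(\star)$, since every clique vertex is adjacent to the initial vertex $\star$.
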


\begin{proof} \emph{($a\implies b$):} Let $w:V(G)\to\R$ be the weight function and $t\in \R$ the threshold value for $G$.  To show the underlying graph is threshold, use the weight function $|w(v)|$ and the same threshold $t$.  For transitivity, suppose $x\rightarrow y$ and $y\rightarrow z$.  By definition then, we know that $|w(x)|+|w(y)|\ge t$ and $|w(y)|+|w(z)|\ge t$ and  that $w(x)>w(y)>w(z)$.  We need to consider 2 cases to show that $x\to z$.

Case 1) $w(y)\ge 0$:  Then $w(x)>0$ so $|w(x)|\ge |w(y)|$ therefore $|w(x)|+|w(z)|\ge |w(y)|+|w(z)|\ge t.$  So $x\rightarrow z$. 

Case 2) $w(y)\le 0$:  Then $w(z)<0$ so $|w(z)|\ge |w(y)|$ therefore $|w(x)|+|w(z)|\ge |w(x)|+|w(y)|\ge t.$ So again $x\rightarrow z$.

This shows that $G$ is transitive, and thus a transitive orientation of a threshold graph.

\emph{($b\implies c$):} Let $G$ be the underlying threshold graph associated with the sequence $\bar s$ (Theorem \ref{TGequiv}.)  The initial vertex in the sequential construction is given the label $\star$.  Now, the collection of $\star$ and the $0$'s form an independent set; call this set of vertices $I$. The collection of $1$s form a clique, call this set $K$.  Set $T = N^-(\star)$ and $B= N^+(\star)$.  Since every $1$ was adjacent to $\star$, this partitions $K$.  By the transitivity of the ordering, if $t\in T$ and $b\in B$, then $t\to \star$ and $\star\to b$ so $t\to b$.   This gives us the partition $T\cup I\cup B.$  We have that all edges between $T$ and $B$ are oriented correctly.  

We still need to show that edges between $K$ and $I$ are oriented correctly. That is, we need to show the edges are directed from $T$ to $I$, and the edges are directed to $B$ from $I.$   To do this we first show that $N^+$ and $N^-$ are nested on $I$.  Since the underlying graph is threshold, the neighborhood function is nested (this is the nested neighborhoods condition of $(ii)$ in Theorem \ref{TGequiv}.)  So, let $i,j\in I$ with $i\unlhd^N j$.  Suppose there is $x\in N^+(i)\wout N^+(j)$.  Then $x\in N^-(j)$ since $i\unlhd^N j$.  But this means $i\to x \to j$ and transitivity gives $i\to j$, however, that is impossible as $i,j\in I$.  So we must have $N^+(i)\subseteq N^+(j)$.   A similar argument gives $N^-(i)\subseteq N^-(j)$.  This shows property $ii$) of the properly nested condition, Definition \ref{pnn}. 

Using this, and noting that  for all $i\in I$ we have $i\unlhd^N \star$, we get that $N^+(i)\subseteq N^+(\star)=B$ and $N^-(i)\subseteq N^-(\star)=T$, showing that the graph is displit. 

To show the third condition of properly nested neighborhoods, let $x,y\in K$ with $x\to y$.  Then by transitivity $N^+(y)\subsetneq N^+(x)$ (the inclusion is strict because $y\notin N^+(y)$) and $N^-(x)\subsetneq N^-(y)$ (again because $x\notin N^-(x)$) which completes all conditions.   

\emph{($c\implies d$):} Let $i\in I$ be minimal in $I$ with respect to total neighborhoods.   If $N(i)=\emptyset$ then it is an isolate.   If not, we have that either its in-neighborhood is non-empty or its out-neighborhood is non-empty.  Say $x\in N^+(i)$.  Then, $x\in N^+(j)$ for all $j\in I$ since the neighborhood function is nested on $I$.  Let $y$ be the maximum element in the order given by $N^-$ being strictly nested on $K$.   Then since $x\in N^+(j)$ for all $j\in I$, we get that $j\in N^-(x)\subsetneq N^-(y)$ for all $j\in I$.  This show $y$ is dominated by $I$.  To show $y$ is dominated by $K$, suppose to get a contradiction, $y\to z$ for some $z\in K$.  Then $y\in N^-(z)\subsetneq N^-(y)$ which is impossible.  This means $y$ is an in-dominated vertex.   A similar argument gives that if $x\in N^-(i)$ then the maximal vertex with respect to the strictly nested order on $N^+$ is an out-dominating vertex. 

In order to inductively choose an independent, out-dominating, or in-dominated vertex we must show that the removal of such a vertex leaves us with a transitive digraph with properly nested neighborhoods.  If the vertex is isolated, its removal has no effect on the neighborhoods or the transitivity of the graph.  If the vertex is dominating or dominated, then its removal decreases every neighborhood in exactly the same way, leaving comparability conditions intact.  The transitivity also remains as removal of a vertex in any transitive graph leaves the graph transitive. 

This gives us an inductive construction of the directed threshold graph as a sequence of independent, in-dominated, and out dominating vertices as required.

\emph{($d\implies a$)}:  The assumption gives a sequence of zeros, ones, and negative ones, say $(s_i)_{i=1}^n$.  If we forget (temporarily) about the sign on the ones, we have a sequence of zeros and ones corresponding to removing the direction on the edges.  This underlying graph is constructed by adding isolates or dominating vertices.  This means it is an unoriented threshold graph.  There is an injective weight function and threshold for this underlying graph \cite{MP95}, say $(w_i)_{i=1}^n$ and $t$.  It is enough to show then, that the weight function $$\vec w_i=\begin{cases} s_iw_i, & \mathrm{if} s_i\ne 0 \cr w_i, & \mathrm{if} s_i=0.\end{cases}$$ gives the correct orientation of the edges.  

Let's make an observation about the weights of the vertices.  Notice that the weight of a vertex is directly correlated to the size of its neighborhood.  This means the later in the sequence a $1$ happens, the higher the weight of the vertex associated to it.  Conversely, the later in the sequence a $0$ happens, the lower the weight of the associated vertex.   

With this, we see that the above weight function satisfies $|\vec w_i|>|\vec w_j|$ whenever $i>j$ and $|s_i|=1$.  This means that the orientation of the graph given by the above weight function is the same as the orientation given by the sequential construction. 

\end{proof}

\begin{rem}\mylabel{dtgseq} This last equivalence gives a ternary sequence which can be translated into an oriented threshold graph.  We call the graph a sequence $s$ produces, $\dtg(s)$

\end{rem}

\begin{exa} Let $\bar s = (1,-1,0,-1,\star)$.   The sequential construction yields the following graph.

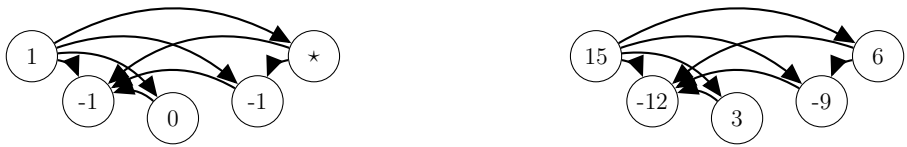
\begin{figure}[htb]\caption{(Left) $\dtg(1,-1,0,-1,\star)$ and (right) the directed threshold graph corresponding to the weight function $(15,-12, 3, -9, 6)$ and threshold $15$.}\mylabel{dtg1}

\begin{center}
\begin{tikzpicture}[scale=.75, transform shape,>=triangle 45]

\node[place] (5) at (-12.5,0){1};
\node[place] (-4) at (-11.5,-.8){-1};
\node[place] (1) at (-10,-1.1){0};
\node[place] (-3) at (-8.5,-.8){-1};
\node[place] (2) at (-7.5,0){$\star$};

\node[place] (50) at (-2.5,0){15};
\node[place] (-40) at (-1.5,-.8){-12};
\node[place] (10) at (0,-1.1){3};
\node[place] (-30) at (1.5,-.8){-9};
\node[place] (20) at (2.5,0){6};

\tikzstyle{EdgeStyle}=[->,bend left]
\Edge({5})({-4});
\Edge({5})({1});
\Edge({5})({-3});
\Edge({5})({2});
\tikzstyle{EdgeStyle}=[<-, bend left]
\Edge({-4})({1});
\Edge({-4})({-3});
\Edge({-4})({2});
\Edge({-3})({2});

\tikzstyle{EdgeStyle}=[->,bend left]
\Edge({50})({-40});
\Edge({50})({10});
\Edge({50})({-30});
\Edge({50})({20});
\tikzstyle{EdgeStyle}=[<-, bend left]
\Edge({-40})({10});
\Edge({-40})({-30});
\Edge({-40})({20});
\Edge({-30})({20});

\end{tikzpicture} 

\end{center}

\end{figure}

The vertex weights shown in the figure, $(15, -12, 3, -9, 6)$  with threshold $t=15$ also give the same graph. 
\end{exa}

\section{Sequential Form and Enumeration}

A few things before we go further: to draw and think about these oriented threshold graphs, the sequential definition is quite a bit more malleable; we work with it.  Recall that in the undirected case, the first vertex drawn is always an independent vertex and we denoted it by $\star$.  We'll use this convention with oriented threshold graph sequences as well.  Another simplification, instead of $+1$ and $-1$ we simply write $+$ and $-$ (resp.)  

Looking more closely at these sequences, things get a little messy.  In the undirected case, it is easy to just count the sequences, $\lb 0,1\rb^{n-1}$ ($n-1$ as the first vertex drawn does not matter.)  Things are a little more subtle in the case of directed threshold graphs.  Notice (in figure \ref{mppm}) that the sequence $(+-0\,\star)$ gives the same graph as the sequence $(-+0\,\star)$. The isomorphism switches the last two vertices, as shown in the following figure.

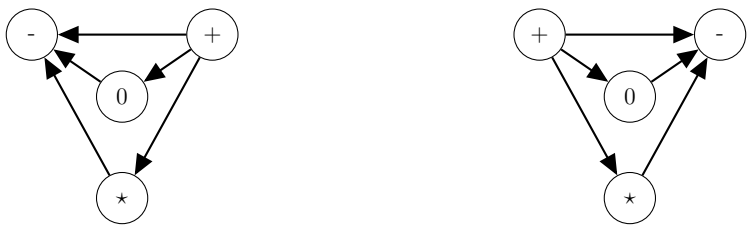
\begin{figure}[htb]\caption{(Left) $\dtg(-1,1,0,\star)$, and (right) $\dtg(1,-1,0,\star)$}\mylabel{mppm}

\begin{center}

\begin{tikzpicture}[scale=.75, transform shape,>=triangle 45]

\node[place] (+) at (-1.6,0){+};
\node[place] (-) at (1.6,0){-};
\node[place] (0) at (0,-1.1){0};
\node[place] (s) at (0,-2.9){$\star$};

\tikzstyle{EdgeStyle}=[->]

\Edge({+})({-})
\Edge({+})({0})
\Edge({+})({s})

\tikzstyle{EdgeStyle}=[<-]
\Edge({-})({0})

\Edge({-})({s})

\node[place] (-) at (-10.6,0){-};
\node[place] (+) at (-7.4,0){+};
\node[place] (0) at (-9,-1.1){0};
\node[place] (s) at (-9,-2.9){$\star$};

\tikzstyle{EdgeStyle}=[<-]

\Edge({-})({+})
\Edge({-})({0})
\Edge({-})({s})

\tikzstyle{EdgeStyle}=[->]
\Edge({+})({0})

\Edge({+})({s})

\end{tikzpicture} 

\end{center}

\end{figure}

\begin{lem}\mylabel{pmcomm} Given a sequence $\bar s:=(s_i)_{i=1}^n$, if there is a $k\in [n]$ such that $|s(k)|=|s(k-1)|$ then the sequence $s' = (s_1,s_2, \cdots, s_{k-2}, s_k, s_{k-1},s_{k+1},\cdots, s_n)$ produces a digraph isomorphic to the one produced by $s$.  \end{lem}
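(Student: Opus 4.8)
The plan is to exhibit an explicit isomorphism that is the identity on every vertex except the two whose positions are exchanged, after first reducing the statement to a single nontrivial case. First I would observe that if $s_{k-1}=s_k$ then $s'=\bar s$ and there is nothing to prove. Since $|s_{k-1}|=|s_k|$, the only remaining possibility is $\{s_{k-1},s_k\}=\{+1,-1\}$, and as this forces both entries to have absolute value $1$, neither of the exchanged vertices can be the initial vertex $\star$. By the symmetry interchanging the roles of $+1$ and $-1$ I may assume $s_k=+1$ and $s_{k-1}=-1$.

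Next I would set up notation for the two constructions. Reading $\bar s$ right to left, let $U$ be the set of vertices built before this pair (positions $k+1,\dots,n$) and $W$ the set built after it (positions $1,\dots,k-2$); these are created by identical steps in $\bar s$ and in $s'$. In $\dtg(\bar s)$ let $x$ be the out-dominating vertex added first and $y$ the in-dominated vertex added second, while in $\dtg(s')$ let $x'$ be the in-dominated vertex added first and $y'$ the out-dominating vertex added second. I then define $\phi:V(\dtg(\bar s))\to V(\dtg(s'))$ by $\phi|_{U\cup W}=\mathrm{id}$, $\phi(x)=y'$, and $\phi(y)=x'$, so that $\phi$ matches out-dominating to out-dominating and in-dominated to in-dominated.

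The key observation is that, by Definition \ref{doms}, an out-dominating vertex is a source adjacent to every other vertex and an in-dominated vertex is a sink adjacent to every other vertex; in either case all of its incidences are forced by its own type alone and do not depend on when it is inserted relative to vertices that are present in both constructions. Hence every edge of $\dtg(\bar s)$ with exactly one endpoint in $\{x,y\}$ is determined by the type of that endpoint together with the identically built sets $U$ and $W$, and is therefore preserved by $\phi$; the edges lying entirely inside $U\cup W$ are built by the same steps in both sequences and are fixed by $\phi$. Thus the only edge whose orientation could possibly change under the transposition is the one joining the two exchanged vertices. I would then check this last edge directly: in $\dtg(\bar s)$ the sink $y$ is inserted after $x$, giving $x\to y$, while in $\dtg(s')$ the source $y'$ is inserted after $x'$, giving $y'\to x'$; since $\phi(x)=y'$ and $\phi(y)=x'$, the edge $x\to y$ is carried to $y'\to x'$, which is exactly the edge present in $\dtg(s')$. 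Therefore $\phi$ preserves all adjacencies and orientations and is an isomorphism.

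I expect the only real obstacle to be the bookkeeping of construction order against edge orientation. The conceptual crux I would foreground is the reduction showing that the sole nontrivial case is the adjacent transposition of an out-dominating and an in-dominated vertex; once that is in place, the ``source/sink adjacent to all'' property makes every incidence except one manifestly independent of insertion order, and the remaining incidence is itself orientation-stable, so the proof becomes essentially a one-edge verification.
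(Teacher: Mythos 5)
Your proof is correct and takes essentially the same approach as the paper's: reduce to the case $s_k=+$, $s_{k-1}=-$, observe that swapping the two vertices leaves every edge to the rest of the graph unchanged (since such edges depend only on vertex types and which side of the pair the other vertex lies on), and check that the single edge between the pair reverses exactly as the transposition requires. The paper states this more tersely in terms of the four neighborhoods $N^{\pm}(k),N^{\pm}(k-1)$ being unchanged; your explicit isomorphism $\phi$ is just a fuller write-up of the same argument.
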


\begin{proof} Clearly if $s_k=s_{k-1}$ we're fine.  So without loss of generality assume $s_k=+$ and $s_{k-1}=-$.  So there is an edge $\overrightarrow{k(k-1)}$.  Now, just note the neighborhoods $N^+(k), N^-(k), N^+(k-1), N^-(k-1)$ do not change when we swap the order of $k$ and $k-1$, as the only edge affected is the one between them, and its order is switched as was needed. \end{proof}

\begin{rem}\mylabel{endplus} As the $\star$ at the beginning of any sequence can be thought of as a $+, -,$ or $0$, we can always think of $-$s adjacent to $\star$ as $+$'s.  \end{rem}

Using the previous lemma and remark we obtain a `canonical' representation for any isomorphism class, namely, $$(+^{p_l},-^{m_l}, 0^{z_l}, +^{p_{l-1}}, -^{m_{l-1}}, 0^{z_{l-1}}, \cdots, +^{p_1}, -^{m_1}, 0^{z_1}, +^{p_0} \star)$$ where $z_i\ne0$ for all $i$.   The notation $+^{p_i}$, $-^{m_i}$, and $0^{z_i}$ simply mean $p_i$ $+$s, $m_i$ $-$s, and $z_i$ $0$s (resp.)   

\begin{theo}\mylabel{canon} \index{digraph, threshold canonical form}There is a bijection between isomorphism classes of directed threshold graphs and sequences of the form $$(+^{p_l},-^{m_l}, 0^{z_l}, +^{p_{l-1}}, -^{m_{l-1}}, 0^{z_{l-1}}, \cdots, +^{p_1}, -^{m_1}, 0^{z_1}, +^{p_0}\star)$$ where the $z_i$ are positive integers and $p_i$ and $m_i$ are non-negative integers.  We call a sequence of this form canonical. 
\end{theo}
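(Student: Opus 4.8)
The plan is to define the map $\Phi$ sending a canonical sequence $s$ to the isomorphism class of $\dtg(s)$, and to prove $\Phi$ is a well-defined bijection. Well-definedness is immediate, since $\dtg(s)$ is only specified up to isomorphism. The work then splits into \emph{surjectivity} (every isomorphism class contains a canonical sequence) and \emph{injectivity} (distinct canonical sequences yield non-isomorphic graphs). I expect surjectivity to follow routinely from the two preceding results, and injectivity to be the heart of the argument.

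For surjectivity I would start from an arbitrary oriented threshold graph $G$. By Theorem \ref{equivdef}, $G = \dtg(\bar s)$ for some ternary sequence $\bar s = (s_i)$. The entries of absolute value $1$ (the $+$'s and $-$'s) and the $0$'s have distinct absolute values, so Lemma \ref{pmcomm} lets me freely reorder within each maximal run of $\pm$ entries while the $0$'s stay fixed and act as immovable separators. Bubble-sorting each such run so that all $+$'s precede all $-$'s, and grouping the untouched $0$'s into their maximal runs $0^{z_i}$ (each $z_i \ge 1$ by maximality, and each interior $\pm$-block nonempty for the same reason), brings $\bar s$ into the displayed shape except possibly for $-$'s in the block adjacent to $\star$. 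Applying Remark \ref{endplus} repeatedly---converting the $-$ nearest $\star$ to a $+$ and re-sorting---turns that final block into $+^{p_0}$. Each step preserves the isomorphism type, so the class of $G$ contains a canonical sequence.

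For injectivity I would show the canonical sequence of $G$ can be read off from $G$ itself, by induction on $|V(G)|$, peeling the last-added (leftmost) symbol. The key is that this symbol is detected by isomorphism-invariant features: I claim $G$ has an out-dominating vertex (Definition \ref{doms}) exactly when $p_l \ge 1$, has an in-dominated vertex exactly when $m_l \ge 1$, and has an isolated vertex exactly when $p_l = m_l = 0$. Reading the construction backwards, the last-added vertex is a global source, a global sink, or an isolated vertex according as the leftmost block begins with $+$, with $-$ (no $+$ present), or with $0$; conversely the topmost $+$ fails to be adjacent to everything precisely when some independent vertex is added after it, which happens iff $p_l = 0$, and an analogous adjacency analysis handles the bottommost $-$. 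Since at most one out-dominating and at most one in-dominated vertex can exist (two global sources would have to be adjacent, forcing a non-source), and since the isolated vertices form a symmetric class, removing the detected vertex is well-defined up to isomorphism and yields a graph whose canonical sequence is exactly $s$ with its leftmost symbol deleted---still canonical. The induction then recovers $s$ from the isomorphism class of $G$, giving injectivity; checking out-dominating before in-dominated before isolated reproduces the canonical priority $+$ before $-$ before $0$ when several of these vertices coexist.

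The main obstacle is the injectivity step, and within it the precise verification of the three detection equivalences together with the boundary bookkeeping. I will lean on the displit structure and the nesting orders from Theorem \ref{equivdef}(c) to identify the topmost $+$ and bottommost $-$ and to justify the adjacency claims, and I will have to argue carefully that deleting the leftmost symbol keeps the sequence canonical in the degenerate cases: when a leading $0$-run is exhausted (so block $l-1$ becomes the new leading block), when $p_0 = 0$, and near the base case $n = 1$, where the lone vertex is $\star$. Finally, making $p_i, m_i, z_i$ unambiguous functions of the \emph{string}---by always grouping $0$'s into maximal runs---is what guarantees the recovered tuple is genuinely an invariant and rules out the spurious non-uniqueness that would otherwise arise from empty interior $\pm$-blocks.
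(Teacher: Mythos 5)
Your surjectivity argument is essentially the paper's: Lemma \ref{pmcomm} to sort each maximal $\pm$-run so $+$'s precede $-$'s, and Remark \ref{endplus} to turn the block adjacent to $\star$ into $+^{p_0}$. Your injectivity argument, however, takes a genuinely different route. The paper compares two distinct canonical sequences head-on: it first reduces to the case where the underlying undirected threshold graphs (hence the $0$-positions) agree, takes the leftmost index where the sequences differ, and uses the adjacent run of $0$'s as an independent set whose vertices are identified inside the graph by their common degree; it then distinguishes the two graphs by a concrete invariant --- the in-neighborhood sizes of that independent set, or, when the run of $0$'s abuts $\star$, the sizes of the displit parts. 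You instead show the canonical string is itself an isomorphism invariant, recovered by induction on $|V(G)|$ by peeling the leftmost symbol, detected through the presence of an out-dominating, in-dominated, or isolated vertex, with uniqueness of the first two guaranteeing the peeled graph is well defined up to isomorphism. Your route buys an actual reconstruction algorithm and avoids the paper's rather terse invariant-comparison casework; the paper's route avoids induction and the bookkeeping about whether the truncated string remains canonical. One correction you will need in a full write-up: the claim ``$G$ has an in-dominated vertex exactly when $m_l \ge 1$'' is false in one direction --- for a transitive tournament $(+^{p_0}\,\star)$ there are no $-$'s at all, yet the sink $\star$ is in-dominated. Your procedure survives because you test for an out-dominating vertex first (the priority $+$ before $-$ before $0$), so that faulty direction is never invoked; the statements you actually need are only that a leftmost $+$ is out-dominating, that a leftmost $-$ forces an in-dominated vertex and excludes out-dominating ones, and that a leftmost $0$ is isolated and excludes both, all of which hold for canonical sequences.
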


\begin{proof}  Let $G$ be a directed threshold graph. Then by Theorem \ref{equivdef} (d) there is a sequence of the characters $+,-$ and $0$ corresponding to $G$.  By Lemma \ref{pmcomm} any grouping of $+$s and $-$s can be rearranged so that all $+$s are to the left of all $-$s in the grouping without changing the isomorphism class.  Also, by Remark \ref{endplus}, if there is a grouping of $+$s and $-$s by $\star$ we can consider them as all $+$s.   Each of these groupings is separated by a grouping of $0$s.   This gives us a canonical sequence for the graph $G$.  

Now, every ternary sequence gives a unique representation of the form $$(+^{p_l},-^{m_l}, 0^{z_l}, +^{p_{l-1}}, -^{m_{l-1}}, 0^{z_{l-1}}, \cdots, +^{p_1}, -^{m_1}, 0^{z_1}, \star). $$   Since each graph gives a ternary sequence, if we can show  that two sequences that have different canonical forms give non-isomorphic graphs, we're done. 

So let $s$ and $t$ be two different ternary sequences in canonical form.  Let's get through a few trivial cases first.   If we take the underlying undirected graphs of $s$ and $t$, and they are non-isomorphic, then $s$ and $t$ themselves cannot be isomorphic.  To get to the undirected underlying graphs, we just look at the binary sequences where $+$s and $-$s are mapped to $1$, and $0$s are mapped to $0$.   If these aren't the same, we're done.  So assume, $s$ and $t$ have the same length and the same number of $0$s moreover the indices of the $0$s are the same.    Let $i$ be the leftmost index in which $s$ and $t$ differ, without loss of generality, say $s_i=+$ and $t_i=-$.  Let $k$ be the next index ($k<i$) where $s_k=t_k=0$ (this exists because $s$ and $t$ are in canonical form, and if it didn't, then $s$ and $t$ would be a sequence of $+$s.)  Then the grouping of $0$s that include $s_k$ and $t_k$ form an independent set, say $I$.  Now, all vertices in $I$ have the same degree, $n-k$.  

From here there are two cases: there is is another $+$ or $-$ after the grouping of $0$s, or $\star$ is the next vertex after the grouping of zeros in which index $k$ lies.  

In the first case, there are no other vertices of degree $n-k$ besides those in $I$.  In the sequence $s$, the number of vertices in the in-neighborhood of the vertices in $I$ is greater than they are in $t$.  This makes the two graphs non-isomorphic. 

In the other case, the number of $+$ and $-$ are different between the two sequences, meaning the displit partition of the vertices is different showing that the sequences represent non-isomorphic graphs. 

\end{proof}

Having a ternary canonical representation for oriented threshold graphs gives us an easy way to count the number of isomorphism classes of directed threshold graphs on $n$ vertices. 

\begin{theo}\mylabel{number} The number of isomorphism classes of directed threshold graphs on $n$ vertices is $F_{2n}$ the $2n$ Fibonacci number (where $F_0=0, F_1=1$.)

\end{theo}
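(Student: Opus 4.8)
The plan is to reduce everything to a counting problem over the canonical sequences of Theorem \ref{canon}, which are in bijection with the isomorphism classes to be enumerated, and then read off the count with a generating function. After deleting the fixed marker $\star$, a canonical sequence is a word of length $n-1$ over $\lb +,-,0\rb$ obeying two local constraints: within each maximal run of nonzero symbols all $+$'s precede all $-$'s, and the nonzero run adjacent to $\star$ uses only $+$'s (Remark \ref{endplus}). First I would define $F(x)=\sum_{L\ge 0}c_L\,x^L$, where $c_L$ is the number of such words of length $L$, so that the number of graphs on $n$ vertices is exactly $c_{n-1}=[x^{n-1}]F(x)$.

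Next I would assemble $F(x)$ by decomposing each word along its maximal runs of $0$'s. If a word has $l$ such runs, it consists of $l$ zero-runs (each nonempty, generating function $\tfrac{x}{1-x}$) separating $l+1$ nonzero blocks of the form $+^p-^m$. The heart of the bookkeeping is that these blocks come in three flavors: the leftmost block may be empty ($\tfrac{1}{(1-x)^2}$); the $l-1$ interior blocks must be nonempty, contributing $\tfrac{1}{(1-x)^2}-1$ each, since otherwise two zero-runs would merge and the bijection of Theorem \ref{canon} would fail; and the block touching $\star$ carries only $+$'s ($\tfrac{1}{1-x}$). Summing the resulting geometric series over $l$ and using the pleasant collapse $(1-x)^3-2x^2+x^3=1-3x+x^2$, I expect the closed form
\[
F(x)=\frac{1}{1-x}+\frac{x}{(1-x)(1-3x+x^2)}=\frac{1-x}{1-3x+x^2}.
\]

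It then remains to identify this rational function in Fibonacci terms. Here I would invoke the standard generating function $\sum_{k\ge 0}F_{2k+2}x^k=\tfrac{1}{1-3x+x^2}$ for the even-indexed Fibonacci numbers; multiplying by $(1-x)$ and using $F_{2k+2}-F_{2k}=F_{2k+1}$, the coefficients telescope, and comparing $[x^{n-1}]$ on both sides delivers the enumeration as a Fibonacci number of index linear in $n$. Alternatively, and perhaps more robustly for the write-up, I would read the denominator $1-3x+x^2$ directly as the linear recurrence $c_L=3c_{L-1}-c_{L-2}$, prove this recurrence by a combinatorial decomposition of canonical sequences at their left end, and then match to the Fibonacci sequence by induction from the base cases.

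The hard part will be precisely this coefficient-extraction and index-matching step, not the generating-function algebra. The count is extremely sensitive to the treatment of the two boundary blocks: the optional emptiness of the leftmost block and the $+$-only restriction at $\star$ each contribute a factor that shifts the Fibonacci index. Getting the exact index therefore requires airtight boundary conventions — applying Remark \ref{endplus} at $\star$ but nowhere else, and genuinely forbidding empty interior blocks so the zero-runs stay maximal — and I would sanity-check the outcome against the small cases ($1$ graph on one vertex, $2$ on two vertices, $5$ on three vertices) to confirm which Fibonacci number actually appears before declaring victory.
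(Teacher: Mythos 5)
Your proposal is correct, and it reaches the count by a genuinely different route than the paper. The paper also works from the canonical sequences of Theorem \ref{canon}, but where you decompose a canonical word globally along its maximal zero-runs and sum a geometric series, the paper runs a local recursion at the left end: writing $T(n)$ for the number of canonical sequences on $n$ vertices and $P(n)$ for those beginning with $+$, one may always prepend $0$ or $+$, and may prepend $-$ exactly to the sequences not beginning with $+$; since prepending $+$ is a bijection onto the sequences that begin with $+$, $P(n-1)=T(n-2)$, whence $T(n)=3T(n-1)-T(n-2)$ with $T(1)=1$, $T(2)=2$, which is then matched to the even-index Fibonacci recursion $F_{2n}=3F_{2(n-1)}-F_{2(n-2)}$. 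That recursion is exactly the fallback you sketch in your last paragraph, so your primary, generating-function route is the new content. Your bookkeeping checks out: the factors $\tfrac{1}{(1-x)^2}$ (free leftmost block), $\tfrac{1}{(1-x)^2}-1$ (nonempty interior blocks), $\tfrac{x}{1-x}$ (nonempty zero-runs) and $\tfrac{1}{1-x}$ (all-$+$ block at $\star$) do assemble into $F(x)=\tfrac{1-x}{1-3x+x^2}$, whose coefficients are $1,2,5,13,34,\dots$, and the extraction $c_L=F_{2L+2}-F_{2L}=F_{2L+1}$ gives $F_{2n-1}$ classes on $n$ vertices. What your method buys is the closed form, explicit control of the boundary blocks, and easy refinements (the same decomposition counts classes by number of zero-runs or of independent vertices); what the paper's method buys is brevity and no analytic machinery. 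Finally, your insistence on sanity-checking the index is vindicated and is not cosmetic: under the normalization $F_0=0$, $F_1=1$ stated in the theorem, the correct count is $F_{2n-1}$ (giving $1,2,5,13,34$), while the stated index $F_{2n}$ would give $1,3,8,21,55$. The paper's own proof silently shifts conventions (it asserts ``$F_0=1$ and $F_2=2$''), so the numbers it actually derives agree with yours; the index in the theorem statement is off by one relative to its own stated convention, and your write-up, by matching base cases explicitly, would repair this.
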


\begin{proof} We find a recursion relation on the classes by looking at the sequences in canonical form.  We can always create a new sequence in canonical from one in canonical form by augmenting it with a $0$ or $+$, but only sequences that have a $0$ or $-$ can be augmented with a $-$ to form a new sequence in canonical form.   Let $T(n)$ be the number of sequences in canonical form, and $P(n)$ be the number of sequences in canonical form starting with a $+$.  Because a sequence in canonical form cannot have $-$ before a $+$, we get that $T(n) = T(n-1)+T(n-1)+(T(n-1)-P(n-1))$ where the first two terms come from augmenting a $0$ or $+$ to an old sequence, and the last term from augmenting a $-$.  Since we could always have augmented a sequence with a $+$, we get that $P(n)=T(n-1)$.   This gives us the recurrence $T(n)=3T(n-1)-T(n-2)$.  The initial conditions are that $T(1)=1$ (being the sequence $+$) and $T(2)=2$ (from the sequences $+\star$ and $0\star$.)  

Let's look at the Fibonacci sequence for a second.  Specifically, let's look at $F_{2n}$.  

\begin{eqnarray}  F_{2n} & =  F_{2n-1}+F_{2n-2}\cr & = 2F_{2n-2}+F_{2n-3} \cr & = 3F_{2n-2}-F_{2n-4} \cr & = 3F_{2(n-1)} - F_{2(n-2)} \end{eqnarray}

Also, notice that $F_0=1$ and $F_2=2$. Therefore we have the same recursion and starting values as the even Fibonacci numbers, and we're done.  \end{proof}

Putting this together with the characterization of directed threshold graphs, \ref{equivdef}, we see that the number of isomorphism classes of orientation of threshold graphs which are transitive is $F_{2n}$.  This follows directly from statement $b)$ from \ref{equivdef} states that every transitive ordering of a threshold graph is directed threshold.  This means that the number of transitive orientations come from tertiary sequences and the number of isomorphism classes we've just shown is $F_{2n}$. 

Canonical representation also allows us to count the number of non-isomorphic transitive orientations of a specific threshold graph.  

\begin{theo}\mylabel{ntransorient} Let $G$ be a threshold graph given by the sequence $(+^{p_l}, 0^{z_l},  \dots, +^{p_1}, 0^{p_1}, +^{p_0}, \star).$  The number of non-isomorphic transitive orientations of $G$ is $$\prod_{i=1}^l (p_l+1).$$\end{theo}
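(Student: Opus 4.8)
The plan is to reduce everything to the canonical ternary form of Theorem \ref{canon} and then count directly. The key observation is that, by the equivalence $(a)\Leftrightarrow(b)$ of Theorem \ref{equivdef}, the transitive orientations of $G$ are exactly the oriented threshold graphs whose underlying undirected graph is $G$. Hence counting non-isomorphic transitive orientations of $G$ is the same as counting the isomorphism classes of oriented threshold graphs with underlying graph $G$, and by Theorem \ref{canon} these classes are in bijection with canonical ternary sequences. So it suffices to count the canonical sequences whose underlying binary sequence is the canonical form of $G$.

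First I would describe the underlying-graph map on canonical sequences. Given a canonical oriented sequence $(+^{a_l},-^{b_l},0^{z_l},\dots,+^{a_1},-^{b_1},0^{z_1},+^{a_0}\star)$, replacing every $+$ and $-$ by $1$ and keeping the $0$'s produces $(1^{a_l+b_l},0^{z_l},\dots,1^{a_1+b_1},0^{z_1},1^{a_0}\star)$, which is already in canonical undirected form, with $0$-blocks of sizes $z_i$ and $1$-blocks of total sizes $a_i+b_i$ (and $a_0$ for the block adjacent to $\star$). For this to equal the canonical form of $G$ — whose $1$-blocks have sizes $p_i$ and whose $0$-blocks have sizes $z_i$ — the $0$-blocks must match exactly (fixing the $z_i$ and their positions) and each $1$-block must have the prescribed total: $a_i+b_i=p_i$ for $1\le i\le l$, together with $a_0=p_0$ for the $\star$-adjacent block.

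Next I would count block by block. For each interior $1$-block ($1\le i\le l$) of size $p_i$ there are exactly $p_i+1$ admissible splittings $+^{a_i}-^{b_i}$ with $a_i+b_i=p_i$ and $a_i,b_i\ge 0$, one for each value $a_i\in\{0,1,\dots,p_i\}$. The block adjacent to $\star$ is exceptional: by Remark \ref{endplus} any $-$ adjacent to $\star$ can be rewritten as a $+$, so in canonical form this block is forced to be all $+$'s and contributes a single choice. Since the blocks are separated by nonempty $0$-blocks, the choices in distinct blocks are independent, and distinct tuples of choices yield distinct canonical sequences, hence (by Theorem \ref{canon}) pairwise non-isomorphic oriented threshold graphs. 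Multiplying the per-block counts then gives $\prod_{i=1}^l (p_i+1)$.

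The step I expect to require the most care is the reduction in the first paragraph: making precise that ``non-isomorphic transitive orientations of $G$'' carries the same count as ``isomorphism classes of oriented threshold graphs with underlying graph $G$.'' This uses $(a)\Leftrightarrow(b)$ of Theorem \ref{equivdef} in both directions — every transitive orientation of a threshold graph is an oriented threshold graph and conversely — together with the fact that isomorphic oriented threshold graphs have isomorphic, hence identical canonical, underlying graphs, so the condition ``underlying graph is $G$'' is well defined on isomorphism classes. The remaining subtlety is the exceptional treatment of the $\star$-adjacent block, which is precisely what makes the product range over $1\le i\le l$ rather than $0\le i\le l$.
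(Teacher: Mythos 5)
Your proposal is correct and follows essentially the same route as the paper's own proof: pass to canonical ternary sequences, count the $p_i+1$ ways to split each interior block of $+$'s into $+^{a_i}-^{b_i}$, and observe that the $\star$-adjacent block is forced to be all $+$'s, so the product runs over $1\le i\le l$ only. You make explicit the reduction step (via Theorem \ref{equivdef} $(a)\Leftrightarrow(b)$ and the bijection of Theorem \ref{canon}) that the paper's terse proof leaves implicit, but the underlying counting argument is identical.
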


\begin{proof} An orientation of $G$ is given by turning some of the $+$s into $-$s.  Canonical form states that we get the same graph if we put $-$s at the end of the string of $+$/$-$s.  So, for each block of $+$s we simply have a choice of where to start putting $-$s.  There are $p_i+1$ choices for each block.  The last block doesn't get any $-$s.  The product of these choices is the total number of orientations which are transitive. \end{proof}

%% Bibliography goes here (You better have one)
%% BibTeX is your friend

\bibliography{OTGbib}{}
\bibliographystyle{plain}

\end{document}